\newtheorem{theorem}{Theorem}\newtheorem{acknowledgement}[theorem]{Acknowledgement}\newtheorem{corollary}[theorem]{Corollary}\newtheorem{definition}[theorem]{Definition}\newtheorem{example}[theorem]{Example}\newtheorem{lemma}[theorem]{Lemma}\newtheorem{proposition}[theorem]{Proposition}\newenvironment{proof}[1][Proof]{\noindent\textbf{#1.} }{\ \rule{0.5em}{0.5em}}
\DeclareMathOperator{\cov}{Cov}
\DeclareMathOperator{\Variance}{Var}
\begin{document}
\title{Bi-log-concavity: some properties and some remarks towards a multi-dimensional
extension}
\author{{\small{}A. Saumard }\\
{\small{} 
Crest-Ensai, Universit{\'e} Bretagne-Loire}}
\maketitle
\begin{abstract}
Bi-log-concavity of probability measures is a univariate extension
of the notion of log-concavity that has been recently proposed in
a statistical literature. Among other things, it has the nice property
from a modelisation perspective to admit some multimodal distributions,
while preserving some nice features of log-concave measures. We compute
the isoperimetric constant for a bi-log-concave measure, extending
a property available for log-concave measures. This implies that bi-log-concave
measures have exponentially decreasing tails. Then we show that the
convolution of a bi-log-concave measure with a log-concave one is
bi-log-concave. Consequently, infinitely differentiable, positive
densities are dense in the set of bi-log-concave densities for $L_{p}$-norms,
$p\in\left[1,+\infty\right]$. We also derive a necessary and sufficient
condition for the convolution of two bi-log-concave measures to be
bi-log-concave. We conclude this note by discussing ways of defining
a multi-dimensional extension of the notion of bi-log-concavity. We
propose an approach based on a variant of the isoperimetric problem,
restricted to half-spaces.
\end{abstract}

\section{Introduction}

\sloppy Bi-log-concavity (of a probability measure on the real line)
is a property recently introduced by D{\"u}mbgen, Kolesnyk and Wilke (\cite{MR3600702}),
that aims at bypassing some restrictive aspects of log-concavity while
preserving some of its nice features. More precisely, bi-log-concavity
amounts to log-concavity of both $F$ and $1-F$ and a simple application
of Pr{\'e}kopa's theorem on stability of log-concavity through marginalization
(\cite{MR0404557}, see also \cite{SauWel2014} for a discussion on
the various proofs of this fundamental theorem) shows that log-concave
measures are also bi-log-concave (see \cite{Bagnoli:95} for a more
direct, elementary proof of this latter fact).

From a modelisation perspective, bi-log-concavity and log-concavity
may be seen as shape constraints. In statistics, when they are available,
shape constraints represent an interesting alternative to more classical
parametric, semi-parametric or non-parametric approaches and constitute
an active contemporary line of research (\cite{MR2757433,MR3881205}).
Bi-log-concavity was indeed proposed in the aim to contribute to this
research area (\cite{MR3600702}). It was used in \cite{MR3600702}
to construct efficient confidence bands for the cumulative distribution
function and some functionals of it. The authors highlight that bi-log-concave
measures admit multi-modal measures while it is well-known that log-concave
measures are unimodal. Furthermore, D{\"u}mbgen et al. \cite{MR3600702}
establish the following characterization of bi-log-concave distributions.
For a distribution function $F$, denote 
\[
J\left(F\right)\equiv\left\{ x\in\mathbb{R}:0<F\left(x\right)<1\right\} 
\]
and call ``non-degenerate'', the functions $F$ such that $J\left(F\right)\neq\emptyset$.

\begin{theorem}[Characterization of bi-log-concavity, \cite{MR3600702}]\label{theorem_charac}Let
$F$ be a non-degenerate distribution function. The following four
statements are equivalent:
\begin{description}
\item [{(i)}] $F$ is bi-log-concave, i.e. $F$ and $1-F$ are log-concave
functions in the sense that their logarithm is concave.
\item [{(ii)}] $F$ is continuous on $\mathbb{R}$ and differentiable on
$J\left(F\right)$ with derivative $f=F^{\prime}$ such that, for
all $x\in J(F)$ and $t\in\mathbb{R}$,
\[
1-\left(1-F(x)\right)\exp\left(-\frac{f(x)}{1-F(x)}t\right)\leq F(x+t)\leq F(x)\exp\left(\frac{f(x)}{F(x)}t\right)\,.
\]
\item [{(iii)}] $F$ is continuous on $\mathbb{R}$ and differentiable
on $J\left(F\right)$ with derivative $f=F^{\prime}$ such that the
hazard function $f/(1-F)$ is non-decreasing and reverse hazard function
$f/F$ is non-increasing on $J(F)$.
\item [{(iv)}] $F$ is continuous on $\mathbb{R}$ and differentiable on
$J\left(F\right)$ with bounded and strictly positive derivative $f=F^{\prime}$.
Furthermore, $f$ is locally Lipschitz continuous on $J\left(F\right)$
with $L_{1}-$derivative $f^{\prime}=F^{\prime\prime}$ satisfying
\[
\frac{-f^{2}}{1-F}\leq f^{\prime}\leq\frac{f^{2}}{F}\,.
\]
\end{description}
\end{theorem}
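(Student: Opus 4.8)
The plan is to extract the regularity asserted in (ii)--(iv) out of the bare log-concavity in (i), and then to pass between the four formulations by the standard calculus of concave functions. Throughout, write $J(F)=(\alpha,\beta)$ with $-\infty\le\alpha<\beta\le+\infty$.

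First I would establish regularity from (i). On $J(F)$ both $\log F$ and $\log(1-F)$ are finite and concave, hence continuous, so $F$ is continuous on $J(F)$; continuity at the finite endpoints follows because a downward jump of $F$ at $\alpha$ would force a downward jump of the concave function $\log(1-F)$ at an interior point of $\{F<1\}$, violating concavity (and symmetrically at $\beta$ for $\log F$). For differentiability I would use that a concave function $g$ has one-sided derivatives everywhere with $g'_{-}\ge g'_{+}$. Applying this to $\log F$ gives $f_{-}(x)/F(x)\ge f_{+}(x)/F(x)$, i.e. $f_{-}(x)\ge f_{+}(x)$, while applying it to $\log(1-F)$ gives $-f_{-}(x)/(1-F(x))\ge -f_{+}(x)/(1-F(x))$, i.e. $f_{-}(x)\le f_{+}(x)$. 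The two inequalities force $f_{-}=f_{+}$, so $F$ is differentiable on $J(F)$. This cancellation is the crux of the regularity step.

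With differentiability in hand, (i)$\iff$(iii) is immediate: a differentiable function is concave iff its derivative is non-increasing, so $\log F$ concave $\iff (\log F)'=f/F$ non-increasing, and $\log(1-F)$ concave $\iff (\log(1-F))'=-f/(1-F)$ non-increasing $\iff f/(1-F)$ non-decreasing. For (i)$\iff$(ii) I would use the tangent-line inequality: concavity of $\log F$ gives $\log F(x+t)\le \log F(x)+(f(x)/F(x))\,t$ for all $t$, which exponentiates to the upper bound, and concavity of $\log(1-F)$ gives the lower bound the same way; conversely, the two inequalities in (ii) say precisely that the differentiable functions $\log F$ and $\log(1-F)$ lie below their tangents at every point of $J(F)$, which is equivalent to their concavity.

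The last equivalence (iii)$\iff$(iv) carries the analytic weight. Positivity and boundedness of $f$ come from the monotonicities: $f>0$ on $J(F)$, and fixing $x_{0}$ one gets $f(x)\le (f/F)(x_{0})$ for $x\ge x_{0}$ and $f(x)\le (f/(1-F))(x_{0})$ for $x\le x_{0}$. To obtain local Lipschitz continuity I would use the representations $f=(f/F)\,F$ and $f=(f/(1-F))(1-F)$ and telescope: for $a\le x<y\le b$ inside $J(F)$, writing $f(y)-f(x)=(f/F)(y)\,(F(y)-F(x))+\big((f/F)(y)-(f/F)(x)\big)F(x)$ and using that $f/F$ is non-increasing bounds $f(y)-f(x)$ above by $C(y-x)$, while the analogous expansion with $f/(1-F)$ bounds it below by $-C'(y-x)$, giving a Lipschitz estimate on every compact subinterval. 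Then $f$ is locally absolutely continuous, its $L_{1}$-derivative $f'=F''$ exists, and differentiating $f/F$ non-increasing and $f/(1-F)$ non-decreasing yields $f'\le f^{2}/F$ and $f'\ge -f^{2}/(1-F)$ almost everywhere; the converse follows by integrating these inequalities to recover the monotonicity of the hazard and reverse hazard functions. I expect the Lipschitz estimate for $f$---upgrading mere monotonicity of the hazards to a modulus of continuity for $f$ itself---to be the main obstacle, together with the careful handling of continuity at the endpoints of $J(F)$ in the regularity step.
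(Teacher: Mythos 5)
The paper does not actually prove this theorem: it is quoted verbatim from D{\"u}mbgen, Kolesnyk and Wilke \cite{MR3600702} and used as a black box, so there is no in-paper argument to compare yours against. Judged on its own, your proof is essentially correct and follows the lines of the original source. You correctly identify the crux of the regularity step: log-concavity of $F$ alone only yields one-sided derivatives of $\log F$ with $f_{-}\ge f_{+}$, and it is the second hypothesis, concavity of $\log(1-F)$, that supplies the reverse inequality $f_{-}\le f_{+}$ and hence genuine differentiability on $J(F)$; likewise continuity on all of $\mathbb{R}$ follows because each of $\log F$ and $\log(1-F)$ is finite and concave on a half-line covering a neighbourhood of the relevant endpoint of $J(F)$ (a small slip in wording: a jump of the non-decreasing $F$ at $\alpha$ is an upward jump, which forces a downward jump of $\log(1-F)$, but the contradiction with continuity of a finite concave function on an open interval is the same). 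The telescoping identity $f(y)-f(x)=(f/F)(y)\left(F(y)-F(x)\right)+\left((f/F)(y)-(f/F)(x)\right)F(x)$, combined with its mirror image for $f/(1-F)$, does yield the two-sided local Lipschitz bound, and the passage between the differential inequalities of (iv) and the monotonicity statements of (iii) via local absolute continuity is sound. The one point you should write out in full is strict positivity of $f$ in (iii)$\Rightarrow$(iv): if $f(x_{0})=0$, the two monotonicities together with $f\ge0$ force $f\equiv0$ on $J(F)$, and since $F$ is differentiable \emph{everywhere} on $J(F)$ (not merely almost everywhere), the mean value theorem makes $F$ constant there, contradicting non-degeneracy; without the ``everywhere differentiable'' observation a Cantor-type counterexample would not be excluded.
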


Note that if one includes degenerate measures - that is Dirac masses
- it is easily seen that the set of bi-log-concave measures is closed
under weak limits.

Just as $s$-concave measures generalize log-concave ones, Laha and
Wellner \cite{wellner2017bi} proposed the concept of bi-$s^{*}$-concavity,
that generalize bi-log-concavity and that include $s$-concave densities.
Some characterizations of bi-$s^{*}$-concavity, that extend the previous
theorem, are derived in \cite{wellner2017bi}.

On the probabilistic side, even if some characterizations are available,
many important questions remain about the properties of bi-log-concave
measures. Indeed, log-concave measures satisfy many nice properties
(see for instance \cite{Guedon:12,SauWel2014,MR3837280} and references
therein) and it is natural to ask whether some of those are extended
to bi-log-concave measures. Answering this question is the primary
object of this note.

We show in Section \ref{sec:Isoperimetry-and-concentration} that
the isoperimetric constant of a bi-log-concave measure is simply equal
to two times the value of its density with respect to the Lebesgue
measure - that indeed exists - at its median, thus extending a property
available for log-concave measures. We deduce that a bi-log-concave
measure has exponential tails, also extending a property valid in
the log-concave case.

In Section \ref{sec:Stability-through-convolution}, we show that
the convolution of a log-concave measure and a bi-log-concave measure
is bi-log-concave. As a consequence, we get that any bi-log-concave
measure can be approximated by a sequence of bi-log-concave measures
having regular densities. Furthermore, we give a necessary and sufficient
condition for the convolution of two bi-log-concave measures to be
bi-log-concave.

Finally, we discuss in Section \ref{sec:Towards-a-generalization}
possible ways to obtain a multivariate notion of bi-log-concavity.
This problem is not a priori obvious, because the definition of bi-log-concavity
in one dimension relies on the cumulative distribution function and
so, on the total order existing on real numbers. To this end, we derive
a characterization of (symmetric) bi-log-concave measures on $\mathbb{R}$
through their isoperimetric profile. Then we propose a multidimensional
generalization for symmetric measures by considering their isoperimetric
profile, restricted to half spaces. We conclude by discussing a way
to strengthen the latter definition in order to ensure stability through
convolution by any log-concave measure. The question of providing
a nice definition of bi-log-concavity in higher dimension, that would
also impose existence of some exponential moments, remains open.

\section{Isoperimetry and concentration for bi-log-concave measures\label{sec:Isoperimetry-and-concentration}}

\sloppypar Let $F\left(x\right)=\mu\left(\left(-\infty,x\right]\right)$
be the distribution function of a probability measure $\mu$ on the
real line. Assume that $\mu$ is non-degenerate (in the sense of its
distribution function being non-degenerate) and let $f$ be the density
of its absolutely continuous part.

Recall the following formula for the isoperimetric constant $Is\left(\mu\right)$
of $\mu$, due to Bobkov and Houdr{\'e} \cite{BobHoud97},
\[
Is\left(\mu\right)=\text{ess}\inf_{x\in J\left(F\right)}\frac{f\left(x\right)}{\min\left\{ F\left(x\right),1-F\left(x\right)\right\} }\text{ .}
\]

The following theorem extends a well-known fact related to isoperimetric
constant for log-concave measure to the case of bi-log-concave measures.

\begin{theorem} \label{theorem_isop_constant}Let $\mu$ be a probability
measure with non-degenerate distribution function $F$ being bi-log-concave.
Then $\mu$ admits a density $f=F^{\prime}$ on $J\left(F\right)$
and it holds
\[
Is\left(\mu\right)=2f\left(m\right)\text{ ,}
\]
where $m$ is the median of $\mu$.\end{theorem}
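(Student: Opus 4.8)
The plan is to read off everything from the characterization in Theorem \ref{theorem_charac} and to feed it into the Bobkov--Houdré formula. First I would invoke statement (iv): since $F$ is bi-log-concave and non-degenerate, it is differentiable on $J(F)$ with a bounded, strictly positive and (locally Lipschitz, hence) continuous derivative $f=F'$. This already yields the first assertion, that $\mu$ admits the density $f$ on $J(F)$. Strict positivity of $f$ makes $F$ strictly increasing on $J(F)$, and continuity of $F$ (statement (ii)) together with $F\to 0$ and $F\to 1$ at the two ends of $J(F)$ guarantees that there is a unique median $m\in J(F)$, characterized by $F(m)=1-F(m)=1/2$.

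Next I would rewrite the ratio appearing in the isoperimetric formula. For $x\le m$ one has $F(x)\le 1/2\le 1-F(x)$, so $\min\{F(x),1-F(x)\}=F(x)$ and the ratio equals the reverse hazard function $f/F$; for $x\ge m$ one has $\min\{F(x),1-F(x)\}=1-F(x)$ and the ratio equals the hazard function $f/(1-F)$. Because $f$ is continuous and strictly positive on $J(F)$, both ratios are continuous, so the essential infimum in the formula coincides with the ordinary infimum. The core step is then monotonicity, supplied by statement (iii): the reverse hazard $f/F$ is non-increasing, so for every $x\le m$ it is bounded below by its value at $m$, namely $f(m)/F(m)=2f(m)$; symmetrically the hazard $f/(1-F)$ is non-decreasing, so for every $x\ge m$ it is bounded below by $f(m)/(1-F(m))=2f(m)$.

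Combining the two half-lines, the infimum over all $x\in J(F)$ of the ratio is at least $2f(m)$, and this bound is attained at $x=m$ itself, where both expressions equal $2f(m)$. Hence $Is(\mu)=2f(m)$.

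I expect the genuinely delicate points to be bookkeeping rather than conceptual: verifying that the median lies in the interior $J(F)$ and is attained (so that the infimum is a genuine minimum), and justifying that the essential infimum may be replaced by the pointwise infimum. Both are handled by the regularity granted in (iv) --- continuity and strict positivity of $f$ --- so that no null-set pathology can lower the value below $2f(m)$. The remainder is the elementary observation that the two monotone hazard functions are each minimized at the median, which they share as a common value.
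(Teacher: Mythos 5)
Your proof is correct and follows essentially the same route as the paper: apply the Bobkov--Houdr\'e formula, split $J(F)$ at the median, identify the ratio with the reverse hazard $f/F$ on the left and the hazard $f/(1-F)$ on the right, and use the monotonicity from statement \textbf{(iii)} of Theorem \ref{theorem_charac} to conclude that the infimum is attained at $m$ with value $2f(m)$. Your additional remarks on replacing the essential infimum by a pointwise infimum and on the uniqueness of the median make explicit some points the paper leaves implicit, but the argument is the same.
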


In general, the isoperimetric constant is hard to compute, but in
the bi-log-concave case Theorem \ref{theorem_isop_constant}\ provides
a straightforward formula, that extends a formula valid for log-concave
measures (see for instance \cite{SauWel2014}).

In the following, we will also use the notation $J\left(F\right)=\left(a,b\right)$.

\begin{proof} Note that the median $m$ is indeed unique by Theorem
\ref{theorem_charac} above. For $x\in\left(a,m\right]$, 
\[
I_{F}\left(x\right):=\frac{f\left(x\right)}{\min\left\{ F\left(x\right),1-F\left(x\right)\right\} }=\frac{f\left(x\right)}{F\left(x\right)}\text{ .}
\]
As $\mu$ is bi-log-concave, $I_{F}$ is thus non-increasing on $\left(a,m\right]$.
For $x\in\left[m,b\right)$,
\[
I_{F}\left(x\right)=\frac{f\left(x\right)}{1-F\left(x\right)}\text{ .}
\]
Thus, $I_{F}$ is non-decreasing on $\left[m,b\right)$. Consequently,
the maximum of $I_{F}\left(x\right)$ is attained on $m$ and its
value is $Is\left(\mu\right)=2f\left(m\right)$. \end{proof}

\begin{corollary} \label{cor_isop_concen}Let $\mu$ as above be
a bi-log-concave measure with median $m$. Then $f\left(m\right)>0$
and $\mu$ satisfies the following Poincar{\'e} inequality: for any square
integrable function $f\in L_{2}\left(\mu\right)$ with derivative
$f^{\prime}\in L_{2}\left(\mu\right)$,
\begin{equation}
f^{2}\left(m\right)\Variance_{\mu}\left(f\right)\leq\int\left(f^{\prime}\right)^{2}d\mu\,,\label{eq:Poin}
\end{equation}
where $\Variance_{\mu}\left(f\right)=\int f^{2}d\mu-\left(\int fd\mu\right)^{2}$
is the variance of $f$ with respect to $\mu$. Consequently, $\mu$
has bounded $\Psi_{1}$ Orlicz norm and achieves the following exponential
concentration inequality,
\begin{equation}
\alpha_{\mu}\left(r\right)\leq\exp\left(-rf\left(m\right)/3\right)\,,\label{eq:concen_gene}
\end{equation}
where $\alpha_{\mu}$ is the concentration function of $\mu$, defined
by $\alpha_{\mu}\left(r\right)=\sup\left\{ 1-\mu\left(A_{r}\right):A\subset\mathbb{R},\mu\left(A\right)\geq1/2\right\} $,
where $r>0$ and $A_{r}=\left\{ x\in\mathbb{R}:\exists y\in A,\left|x-y\right|<r\right\} $
is the (open) $r-$neighborhood of $A$.\end{corollary}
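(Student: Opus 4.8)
The plan is to pass from the isoperimetric constant computed in Theorem \ref{theorem_isop_constant} to a spectral gap (hence the Poincaré inequality \eqref{eq:Poin}), and then from that spectral gap to exponential integrability and the concentration bound \eqref{eq:concen_gene}. First, the positivity $f(m)>0$ is immediate: by Theorem \ref{theorem_charac}(iv) the density $f$ is strictly positive on $J(F)$, so $F$ is strictly increasing there and the median $m$ is the unique solution of $F(m)=1/2$; since $1/2\in(0,1)$ we have $m\in J(F)$, whence $f(m)>0$.

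Next I would establish \eqref{eq:Poin}. The Bobkov--Houdré quantity $Is(\mu)$ is exactly the Cheeger (linear) isoperimetric constant of $\mu$, i.e. the largest $h$ such that $\mu^{+}(A)\geq h\min\{\mu(A),1-\mu(A)\}$ for every Borel set $A$, where $\mu^{+}$ denotes the perimeter (Minkowski content) associated with $\mu$. Cheeger's inequality in this one-dimensional weighted setting gives the lower bound $\lambda_{1}\geq Is(\mu)^{2}/4$ on the spectral gap $\lambda_{1}$, the optimal constant in the variational inequality $\lambda_{1}\Variance_{\mu}(g)\leq\int(g')^{2}\,d\mu$. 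Substituting $Is(\mu)=2f(m)$ from Theorem \ref{theorem_isop_constant} yields $\lambda_{1}\geq f^{2}(m)$, and combining this with the variational inequality (after renaming the test function) gives exactly \eqref{eq:Poin}.

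Finally, for the $\Psi_{1}$ bound and \eqref{eq:concen_gene} I would invoke the classical Gromov--Milman mechanism: a Poincaré inequality with spectral gap $\lambda$ forces every $1$-Lipschitz function to be exponentially integrable, with $\Psi_{1}$-Orlicz norm of order $1/\sqrt{\lambda}=1/f(m)$; equivalently, the Laplace transform of a centered $1$-Lipschitz $g$ is finite for $|s|<\sqrt{\lambda}$. Applying this to the distance function $g(x)=\mathrm{dist}(x,A)$ — which is $1$-Lipschitz and, when $\mu(A)\geq 1/2$, has median $0$ — and optimizing the exponential Markov (Chernoff) inequality produces a tail bound of the form $\alpha_{\mu}(r)\leq e^{-rf(m)/3}$.

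The first two paragraphs are routine; the main obstacle I expect is the constant bookkeeping in the last step: converting the Orlicz/Laplace-transform estimate into a bound on the concentration function forces one to pass between the mean and the median of $g$ and to optimize the Chernoff exponent, and it is precisely this optimization that yields the factor $1/3$. It is worth noting that a sharper route bypasses Poincaré entirely: writing $\phi(r)=1-\mu(A_{r})$, differentiating in $r$ and feeding in the isoperimetric inequality $\mu^{+}(A_{r})\geq 2f(m)(1-\mu(A_{r}))$ (valid once $\mu(A_{r})\geq 1/2$) gives $\phi'\leq-2f(m)\phi$, hence the stronger estimate $\alpha_{\mu}(r)\leq\tfrac12 e^{-2f(m)r}$. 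Thus \eqref{eq:concen_gene} is comfortably within reach; passing through \eqref{eq:Poin} is natural here only because it exhibits the Poincaré inequality and the $\Psi_{1}$ bound, both of independent interest, as by-products.
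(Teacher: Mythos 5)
Your argument is correct and follows essentially the same route as the paper: positivity of $f(m)$ from the characterization theorem, the Poincar\'e inequality \eqref{eq:Poin} via Cheeger's bound $\lambda_{1}\geq Is(\mu)^{2}/4$ combined with $Is(\mu)=2f(m)$ from Theorem \ref{theorem_isop_constant}, and the concentration bound \eqref{eq:concen_gene} via the classical Gromov--Milman/Ledoux consequence of a spectral gap, which is exactly where the factor $1/3$ originates (the paper cites Theorem 3.1 of \cite{MR1849347} for this step). The sharper direct integration of the isoperimetric inequality that you sketch at the end is a valid observation but goes beyond what the paper does.
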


As it is well-known (see \cite{MR1849347} for instance), inequality
(\ref{eq:concen_gene}) implies that for any $1-$Lipschitz function
$f$,
\[
\mu\left(f\geq m_{f}+r\right)\leq\exp\left(-rf\left(m\right)/3\right)\,,
\]
where $m_{f}$ is a median of $f$, that is $\mu\left(f\geq m_{f}\right)\geq1/2$
and $\mu\left(f\geq m_{f}\right)\geq1/2$.

\begin{proof}The fact that $f\left(m\right)>0$ is given by point
\textbf{(iii)} of Theorem \ref{theorem_charac} above. Then Inequality
(\ref{eq:Poin}) is a consequence of Theorem \ref{theorem_isop_constant}
via Cheeger's inequality for the first eigenvalue of the Laplacian
(see for instance Inequality 3.1 in \cite{MR1849347}). Inequality
(\ref{eq:concen_gene}) is a classical consequence of Inequality (\ref{eq:Poin})
as well (see Theorem 3.1 in \cite{MR1849347}).\end{proof}

Note that, following Bobkov \cite{Bobkov:96a}, for a log-concave
probability measure $\mu$ on $\mathbb{R}$ having a positive density
$f$ on $J\left(F\right)$, the function $I\left(p\right)=f\left(F^{-1}\left(p\right)\right)$
is concave. By Theorem \ref{theorem_charac} above, bi-log-concavity
of $\mu$ reduces to non-increasingness of the functions $f/F$ and
$-f/\left(1-F\right)$, which is equivalent to non-increasingness
of $I\left(p\right)/p$ and $-I\left(p\right)/\left(1-p\right)$.
As $I\left(p\right)$ is concave for a log-measure $\mu$ and as $I\left(0\right)=I\left(1\right)=0$,
bi-log-concavity follows. This gives another proof of the fact that
log-concave measures are bi-log-concave.

\begin{example} The function $I\left(p\right)=f\left(F^{-1}\left(p\right)\right)$
is in general hard to compute. But a few easy examples exist. For
instance, for the logistic distribution, $F\left(x\right)=1/\left(1+\exp\left(-x\right)\right)$,
we have $I\left(p\right)=p\left(1-p\right)$. For the Laplace distribution,
$f\left(x\right)=\exp\left(-\left\vert x\right\vert \right)/2$, $I\left(p\right)=\min\left\{ p,1-p\right\} $.
\end{example}

\section{Stability through convolution\label{sec:Stability-through-convolution}}

Take $X$ and $Y$ two independent random variables with respective
distribution functions $F_{X}$ and $F_{Y}$ that are bi-log-concave.
Hence $X$ and $Y$ have densities, denoted by $f_{X}$ and $f_{Y}$.
Then
\begin{equation}
F_{X+Y}\left(x\right)=\mathbb{P}\left(X+Y\leq x\right)\mathbb{=E}\left[\mathbb{P}\left(X\leq x-Y\left\vert Y\right.\right)\right]=\int F_{X}\left(x-y\right)f_{Y}\left(y\right)dy\text{ .}\label{eq:formula_distrib}
\end{equation}
In addition,
\begin{equation}
1-F_{X+Y}\left(x\right)=\int\left(1-F_{X}\left(x-y\right)\right)f_{Y}\left(y\right)dy\text{ .}\label{eq:formula_upper_tail-1}
\end{equation}
\begin{proposition} \label{prop_stab_log_con}If $X$ is bi-log-concave,
$Y$ is log-concave and $X$ is independent from $Y$, then $X+Y$
is bi-log-concave. \end{proposition}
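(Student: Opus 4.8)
The plan is to reduce the statement to a single application of Pr\'ekopa's theorem on the stability of log-concavity under marginalization, which is exactly the tool invoked in the introduction to show that log-concave measures are bi-log-concave. By point \textbf{(i)} of Theorem \ref{theorem_charac}, asserting that $X+Y$ is bi-log-concave means precisely that both $F_{X+Y}$ and $1-F_{X+Y}$ are log-concave, so I would verify these two log-concavity statements separately, using the integral representations \eqref{eq:formula_distrib} and \eqref{eq:formula_upper_tail-1}.

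First I would record the hypotheses in the form I need them: bi-log-concavity of $X$ gives that $F_X$ and $1-F_X$ are log-concave on $\mathbb{R}$, while log-concavity of $Y$ means that $Y$ has a log-concave density $f_Y$. Next I would introduce the two-variable function $g(x,y)=F_X(x-y)\,f_Y(y)$ on $\mathbb{R}^2$ and argue that it is log-concave: the map $(x,y)\mapsto x-y$ is affine, so $(x,y)\mapsto F_X(x-y)$ is log-concave on $\mathbb{R}^2$ as the composition of the concave $\log F_X$ with a linear map; the map $(x,y)\mapsto f_Y(y)$ is log-concave on $\mathbb{R}^2$, being $\log f_Y$ composed with a coordinate projection; and a product of non-negative log-concave functions is log-concave. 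Pr\'ekopa's theorem then asserts that the marginal $x\mapsto\int_{\mathbb{R}}g(x,y)\,dy=F_{X+Y}(x)$ is log-concave. The same argument applied to $\tilde g(x,y)=\bigl(1-F_X(x-y)\bigr)f_Y(y)$, again a product of two log-concave functions on $\mathbb{R}^2$, shows via \eqref{eq:formula_upper_tail-1} that $x\mapsto 1-F_{X+Y}(x)$ is log-concave.

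Finally I would check that $F_{X+Y}$ is non-degenerate, so that the notion applies and Theorem \ref{theorem_charac} can be used: since $X$ is bi-log-concave it is non-degenerate, $X+Y$ admits a density (as $Y$ does), and $0<F_{X+Y}<1$ on a non-empty interval, whence $J(F_{X+Y})\neq\emptyset$. Together with the two log-concavity statements, this yields that $X+Y$ is bi-log-concave.

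I do not expect a serious conceptual obstacle once the problem is cast in this product-and-marginalize form; the delicate points are purely technical. One must treat the joint log-concavity of $g$ and $\tilde g$ carefully on the boundary of their supports, since log-concave functions may vanish and the relevant supports are half-planes determined by the supports of $F_X$, $1-F_X$ and $f_Y$, and one must confirm that the marginal integrals are finite so that Pr\'ekopa's theorem genuinely applies. This support bookkeeping and these integrability checks, rather than any real difficulty, are where I would spend most of the care.
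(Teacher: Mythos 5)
Your proposal is correct and follows exactly the paper's own argument: both use the integral representations \eqref{eq:formula_distrib} and \eqref{eq:formula_upper_tail-1} to write $F_{X+Y}$ and $1-F_{X+Y}$ as marginals of products of log-concave functions and then apply Pr\'ekopa's theorem. The paper states this in one line; you have simply filled in the routine verification of joint log-concavity that the paper leaves implicit.
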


\begin{proof} By using formulas (\ref{eq:formula_distrib}) and (\ref{eq:formula_upper_tail-1}),
this is a direct application of Pr{\'e}kopa's theorem (\cite{MR0404557})
on the marginal of a log-concave function. \end{proof}

\begin{corollary} \label{corollary_approx}Take a (non-degenerate)
bi-log-concave measure on $\mathbb{R}$, with density $f$. Then there
exists a sequence of infinitely differentiable bi-log-concave densities,
positive on $\mathbb{R}$, that converge to $f$ in $L_{p}\left(\textrm{Leb}\right)$,
for any $p\in\left[1,+\infty\right].$ \end{corollary}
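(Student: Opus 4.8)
The plan is to smooth $f$ by convolving with a shrinking Gaussian and to read off every desired property from Proposition \ref{prop_stab_log_con}. Concretely, let $\varphi_{\sigma}$ denote the density of the centered Gaussian law $N(0,\sigma^{2})$, which is log-concave, and set $f_{\sigma}=f*\varphi_{\sigma}$, the density of $X+\sigma Z$ where $X$ has density $f$ and $Z\sim N(0,1)$ is independent of $X$. Since $X$ is bi-log-concave and $\sigma Z$ is log-concave, Proposition \ref{prop_stab_log_con} shows that $f_{\sigma}$ is bi-log-concave for every $\sigma>0$. Taking $\sigma=1/n$ will produce the announced sequence, so it remains only to check regularity, positivity, and convergence.

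Regularity and positivity are immediate. Because $\varphi_{\sigma}\in C^{\infty}$ with all derivatives bounded and integrable and $f\in L_{1}(\mathrm{Leb})$, differentiating under the integral sign (moving derivatives onto the Gaussian factor) gives $f_{\sigma}\in C^{\infty}(\mathbb{R})$; each $f_{\sigma}$ integrates to one, so it is a genuine probability density. Positivity is clear from $f_{\sigma}(x)=\int f(y)\varphi_{\sigma}(x-y)\,dy$, since $\varphi_{\sigma}>0$ everywhere while $f\geq 0$ is not almost-everywhere zero, so the integrand is strictly positive on a set of positive measure.

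For the convergence, first recall from Theorem \ref{theorem_charac}\textbf{(iv)} that $f$ is bounded; being also integrable, $f\in L_{p}(\mathrm{Leb})$ for every $p\in[1,+\infty]$. For finite $p$ I would invoke the standard approximate-identity estimate $\|f_{\sigma}-f\|_{p}\leq\int\varphi_{\sigma}(z)\,\|\tau_{z}f-f\|_{p}\,dz$, where $\tau_{z}$ is translation by $z$; continuity of translation in $L_{p}(\mathrm{Leb})$ for $p<+\infty$, together with $\varphi_{\sigma}$ concentrating at the origin as $\sigma\to 0$, forces the right-hand side to $0$. This settles $p\in[1,+\infty)$.

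The main obstacle is the endpoint $p=+\infty$, since $\|f_{\sigma}-f\|_{\infty}\to 0$ requires uniform continuity of $f$, and no sequence of continuous densities can converge uniformly to a density with a jump. When $J(F)=\mathbb{R}$ this difficulty disappears: $f$ is then continuous on all of $\mathbb{R}$, and the exponential tails furnished by Corollary \ref{cor_isop_concen} force $f(x)\to 0$ as $|x|\to\infty$, so $f$ lies in $C_{0}(\mathbb{R})$ and is therefore uniformly continuous; a routine mollifier argument then yields $f_{\sigma}\to f$ uniformly. The genuinely delicate case is a density supported on a proper interval or half-line, where $f$ may jump at the boundary of its support (as for the uniform law); here the uniform statement requires separate treatment, and this is the point I expect to demand the most care.
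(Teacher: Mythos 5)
Your construction is exactly the paper's: convolve $f$ with centered Gaussians of shrinking variance, invoke Proposition \ref{prop_stab_log_con} to keep bi-log-concavity, and use the classical approximate-identity estimates for the convergence. For $p\in\left[1,+\infty\right)$, and for $p=+\infty$ when $J\left(F\right)=\mathbb{R}$, your argument is complete and in fact more detailed than the paper's, which simply cites ``classical theorems about convolution in $L_{p}$''. One small repair: the decay $f\left(x\right)\rightarrow0$ at $\pm\infty$ when $J\left(F\right)=\mathbb{R}$ does not follow from exponential tails of the \emph{measure} alone; combine integrability of $f$ with the one-sided bound $f^{\prime}\leq f^{2}/F\leq\sup f^{2}/F\left(x_{0}\right)$ from Theorem \ref{theorem_charac}\textbf{(iv)}, which caps the slope at which $f$ can rise and hence rules out tall bumps of non-summable mass escaping to infinity; uniform continuity and the uniform convergence of $f\ast\varphi_{\sigma}$ then follow as you say.

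The case you flag as ``genuinely delicate'' --- $p=+\infty$ with $J\left(F\right)$ a proper interval --- is not merely delicate: your own observation shows the statement fails there, so there is nothing left for you to finish. The uniform density $\mathbf{1}_{\left[0,1\right]}$ is log-concave, hence bi-log-concave and covered by the corollary, yet it is not almost everywhere equal to any continuous function; since an $L_{\infty}\left(\textrm{Leb}\right)$-convergent sequence of continuous functions has a continuous limit up to a null set, \emph{no} sequence of smooth densities, by whatever construction, converges to it in $L_{\infty}\left(\textrm{Leb}\right)$. The paper's two-line proof passes over this point: the classical convolution theorems it appeals to give uniform convergence only for (a representative of) $f$ that is uniformly continuous. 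The endpoint $p=+\infty$ should therefore either be dropped from the statement or restricted to the case $J\left(F\right)=\mathbb{R}$, and your treatment is the correct reading of what can actually be proved.
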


Corollary \ref{corollary_approx} is also an extension of an approximation
result available in the set of log-concave distributions, see \cite[Section 5.2]{SauWel2014}.

\begin{proof}It suffices to consider the convolution of $f$ with
a sequence of centered Gaussian densities with variances converging
to zero. As $f$ has an exponential moment, it belongs to any $L_{p}\left(\textrm{Leb}\right)$,
$p\in\left[1,+\infty\right].$ Then a simple application of classical
theorems about convolution in $L_{p}$ (see for instance \cite[p. 148]{MR924157})
allows to check that the approximations converge to $f$ in any $L_{p}\left(\textrm{Leb}\right)$,
$p\in\left[1,+\infty\right].$ \end{proof}

More generally, the following theorem gives a necessary and sufficient
condition for the convolution of two bi-log-concave measures to be
bi-log-concave.

\begin{theorem}\label{theorem_stabi_convol}Take $X$and $Y$ two
independent bi-log-concave random variables with respective densities
$f_{X}$ and $f_{Y}$ and cumulative distribution functions $F_{X}$
and $F_{Y}$. Denote $w\left(x,y\right)=f_{Y}\left(y\right)F_{X}\left(x-y\right)$
and $\bar{w}\left(x,y\right)=f_{Y}\left(y\right)\left(1-F_{X}\right)\left(x-y\right)$
and consider for any $x$$\in J\left(F_{X+Y}\right)$, the following
measures on $\mathbb{R}$,
\[
dm_{x}\left(y\right)=\frac{w\left(x,y\right)dy}{\int w\left(x,y\right)dy}=\frac{w\left(x,y\right)dy}{F_{X+Y}\left(x\right)}
\]
and
\[
d\bar{m}_{x}\left(y\right)=\frac{\bar{w}\left(x,y\right)dy}{\int\bar{w}\left(x,y\right)dy}=\frac{\bar{w}\left(x,y\right)dy}{1-F_{X+Y}\left(x\right)}\,.
\]
Then $X+Y$ is bi-log-concave if and only if for any $x$$\in J\left(F_{X+Y}\right)$,
\begin{equation}
\cov_{m_{x}}\left(\left(-\log f_{Y}\right)^{\prime},\left(-\log F_{X}\right)^{\prime}\left(x-\cdot\right)\right)\geq0\label{eq:cond_cov_1}
\end{equation}
and
\begin{equation}
\cov_{\bar{m}_{x}}\left(\left(-\log f_{Y}\right)^{\prime},\left(-\log\left(1-F_{X}\right)\right)^{\prime}\left(x-\cdot\right)\right)\geq0\,.\label{eq:cond_cov_2}
\end{equation}
\end{theorem}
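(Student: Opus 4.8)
The plan is to invoke the characterization of Theorem~\ref{theorem_charac}: $X+Y$ is bi-log-concave if and only if both $F_{X+Y}$ and $1-F_{X+Y}$ are log-concave, that is $(\log F_{X+Y})''\leq 0$ and $(\log(1-F_{X+Y}))''\leq 0$ on $J(F_{X+Y})$ (in the almost-everywhere sense licensed by point \textbf{(iv)}, the convolved density being positive and locally Lipschitz). The two requirements are perfectly symmetric --- one is built on $w$ and $m_{x}$ with $F_{X}$, the other on $\bar{w}$ and $\bar{m}_{x}$ with $1-F_{X}$ --- so it suffices to carry out the argument for $F_{X+Y}$ and transcribe it.

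First I would express the reverse hazard of the sum as an average against $m_{x}$. From (\ref{eq:formula_distrib}) one has $\partial_{x}\log w(x,y)=(\log F_{X})'(x-y)$, hence
\[
(\log F_{X+Y})'(x)=\frac{\int\partial_{x}w(x,y)\,dy}{\int w(x,y)\,dy}=\mathbb{E}_{m_{x}}\big[(\log F_{X})'(x-\cdot)\big]\,.
\]
Differentiating once more, I would use the elementary identity, valid whenever differentiation under the integral sign is permitted,
\[
\frac{d}{dx}\mathbb{E}_{m_{x}}[h_{x}]=\mathbb{E}_{m_{x}}[\partial_{x}h_{x}]+\cov_{m_{x}}(h_{x},\partial_{x}\log w)\,,
\]
applied to $h_{x}(y)=\partial_{x}\log w(x,y)=(\log F_{X})'(x-y)$. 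This yields
\[
(\log F_{X+Y})''(x)=\mathbb{E}_{m_{x}}\big[(\log F_{X})''(x-\cdot)\big]+\Variance_{m_{x}}\big((\log F_{X})'(x-\cdot)\big)\,.
\]

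The main obstacle is now apparent: the curvature term is $\leq 0$ because $F_{X}$ is log-concave, while the variance term is $\geq 0$, so the sum has no fixed sign and cannot be controlled term by term. The heart of the proof is to reabsorb the variance through an integration by parts in $y$. The logarithmic derivative in $y$ of the density of $m_{x}$ equals $(\log f_{Y})'(y)-(\log F_{X})'(x-y)$; substituting $(\log F_{X})'(x-\cdot)=(\log f_{Y})'-\big[(\log f_{Y})'-(\log F_{X})'(x-\cdot)\big]$ into one slot of the variance and integrating by parts against that density turns $\Variance_{m_{x}}((\log F_{X})'(x-\cdot))$ into $\cov_{m_{x}}((\log f_{Y})',(\log F_{X})'(x-\cdot))$ minus a term that equals precisely $\mathbb{E}_{m_{x}}[(\log F_{X})''(x-\cdot)]$. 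The boundary contributions vanish because $(\log F_{X})'(x-y)$ times the density of $m_{x}$ is proportional to $f_{Y}(y)f_{X}(x-y)$, which tends to $0$ at $\pm\infty$ thanks to the exponential tails furnished by Corollary~\ref{cor_isop_concen}. The spurious term then cancels the curvature term exactly, leaving the clean identity
\[
(\log F_{X+Y})''(x)=\cov_{m_{x}}\big((\log f_{Y})',(\log F_{X})'(x-\cdot)\big)\,.
\]

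It remains to read off the sign. Non-positivity of the right-hand side for every $x\in J(F_{X+Y})$ is the content of (\ref{eq:cond_cov_1}): since $(-\log f_{Y})'=-(\log f_{Y})'$ and, by the chain rule, the $y$-derivative of $y\mapsto-\log F_{X}(x-y)$ equals $(\log F_{X})'(x-y)$, the covariance appearing in (\ref{eq:cond_cov_1}) is the negative of the one just displayed, so $(\log F_{X+Y})''\leq 0$ if and only if (\ref{eq:cond_cov_1}) holds. Running the identical computation with $\bar{w}$, $\bar{m}_{x}$ and $1-F_{X}$ in place of $w$, $m_{x}$ and $F_{X}$ --- here $\partial_{x}\log\bar{w}(x,y)=(\log(1-F_{X}))'(x-y)$, the score of $\bar{m}_{x}$ is $(\log f_{Y})'(y)-(\log(1-F_{X}))'(x-y)$, and the same controlling factor $f_{Y}(y)f_{X}(x-y)$ kills the boundary terms --- gives $(\log(1-F_{X+Y}))''(x)=\cov_{\bar{m}_{x}}((\log f_{Y})',(\log(1-F_{X}))'(x-\cdot))$, whose non-positivity is condition (\ref{eq:cond_cov_2}). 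Since bi-log-concavity of $X+Y$ is exactly the conjunction of the two log-concavities, combining the two equivalences yields the theorem. Beyond the variance cancellation, the only points needing care are the justification of differentiating under the integral and of the integration by parts, both resting on the positivity, local Lipschitz regularity and exponential decay granted by Theorem~\ref{theorem_charac} and Corollary~\ref{cor_isop_concen}.
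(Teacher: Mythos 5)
Your proof is correct and follows essentially the same route as the paper: you build the same mixture measures $m_{x}$, $\bar{m}_{x}$, write $(\log F_{X+Y})''$ as a curvature term plus a variance under $m_{x}$, and then use exactly the integration by parts of Lemma~\ref{lemma_expect_cov} to trade the variance for the covariance, with the curvature terms cancelling. Your sign bookkeeping --- reading $\left(-\log F_{X}\right)^{\prime}\left(x-\cdot\right)$ as the $y$-derivative of $y\mapsto-\log F_{X}(x-y)$ --- is the interpretation under which conditions (\ref{eq:cond_cov_1})--(\ref{eq:cond_cov_2}) are indeed equivalent to log-concavity of $F_{X+Y}$ and $1-F_{X+Y}$ (and consistent with the log-concave-$f_{Y}$ sanity check), and is in fact slightly more careful than the paper's own final display, which carries a sign slip in $-\rho_{X}\left(x-\cdot\right)$.
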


Of course, a simple symmetrization argument shows that conditions
(\ref{eq:cond_cov_1}) and (\ref{eq:cond_cov_2}) are satisfied if
$\left(-\log f_{Y}\right)^{\prime\prime}\geq0$ pointwise, which means
that $f_{Y}$ is log-concave, in which case we recover Proposition
\ref{prop_stab_log_con}\ above. But Theorem \ref{theorem_stabi_convol}
is more general. Indeed, it is easily checked by direct computations
that the convolution the Gaussian mixture $2^{-1}\mathcal{N}\left(-1.34,1\right)+2^{-1}\mathcal{N}\left(1.34,1\right)$
- which is bi-log-concave but not log-concave, see \cite[Section 2]{MR3600702}
- with itself is bi-log-concave.

To prove Theorem \ref{theorem_stabi_convol}, we will use the following
lemma.

\begin{lemma}\label{lemma_expect_cov}Take $p,q\in\left[1,+\infty\right]$
such that $p^{-1}+q^{-1}=1$ and a measure $\nu$ on $\mathbb{R}$
with density $f=\exp\left(-\phi\right)$ absolutely continuous and
$f^{\prime}\in L_{p}\left(\nu\right)$. Take $g\in L_{q}\left(\nu\right)$
Lipschitz continuous such that $g^{\prime}\in L_{1}\left(\nu\right)$
and 
\[
\lim_{x\rightarrow+\infty}f\left(x\right)\left(g\left(x\right)-\mathbb{E}_{\nu}\left[g\right]\right)=\lim_{x\rightarrow-\infty}f\left(x\right)\left(g\left(x\right)-\mathbb{E}_{\nu}\left[g\right]\right)=0,
\]
then
\[
\mathbb{E}_{\nu}\left[g^{\prime}\right]=\cov_{\nu}\left(g,\phi^{\prime}\right).
\]
 \end{lemma}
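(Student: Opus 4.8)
The plan is to prove the identity by a single integration by parts, where the decisive preliminary move is to center $g$. Writing $c=\mathbb{E}_{\nu}\left[g\right]$ and noting that, since $\nu$ is a probability measure, $\cov_{\nu}\left(g,\phi^{\prime}\right)=\mathbb{E}_{\nu}\left[\left(g-c\right)\phi^{\prime}\right]$, I would use the fundamental relation $f=\exp\left(-\phi\right)$, which gives $f^{\prime}=-\phi^{\prime}f$ (hence $\phi^{\prime}=-f^{\prime}/f$, well defined $\nu$-a.e. since $f>0$), to rewrite
\[
\cov_{\nu}\left(g,\phi^{\prime}\right)=\int_{\mathbb{R}}\left(g-c\right)\phi^{\prime}f\,dx=-\int_{\mathbb{R}}\left(g-c\right)f^{\prime}\,dx\,.
\]
The whole point of subtracting $c$ before integrating is that the boundary term generated below is exactly $\left(g-c\right)f$ evaluated at $\pm\infty$, which is precisely the quantity the hypothesis forces to vanish.

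Next, since $g$ is Lipschitz (so locally absolutely continuous) and $f$ is absolutely continuous, the product $\left(g-c\right)f$ is absolutely continuous on each compact interval, and the Leibniz rule $\left(\left(g-c\right)f\right)^{\prime}=g^{\prime}f+\left(g-c\right)f^{\prime}$ holds almost everywhere. Applying the fundamental theorem of calculus for absolutely continuous functions on $\left[-R,R\right]$ yields
\[
-\int_{-R}^{R}\left(g-c\right)f^{\prime}\,dx=-\left[\left(g-c\right)f\right]_{-R}^{R}+\int_{-R}^{R}g^{\prime}f\,dx\,.
\]
I would then let $R\rightarrow+\infty$ term by term. The boundary contribution $\left(g\left(R\right)-c\right)f\left(R\right)-\left(g\left(-R\right)-c\right)f\left(-R\right)$ tends to $0$ by the two assumed limits, and the remaining integral converges to $\int g^{\prime}f\,dx=\mathbb{E}_{\nu}\left[g^{\prime}\right]$ because $g^{\prime}\in L_{1}\left(\nu\right)$. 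Passing to the limit in the displayed identity then gives $\cov_{\nu}\left(g,\phi^{\prime}\right)=\mathbb{E}_{\nu}\left[g^{\prime}\right]$, as claimed.

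The main obstacle is the integrability bookkeeping needed to justify this passage to the limit, namely the absolute convergence of $\int_{\mathbb{R}}\left(g-c\right)f^{\prime}\,dx$ on the whole line. This is where the conjugate pair $\left(p,q\right)$ enters: by Hölder's inequality, the stated $L_{p}$-condition on the derivative of the density together with $g\in L_{q}\left(\nu\right)$ controls $\int\left|g-c\right|\left|\phi^{\prime}\right|\,d\nu$, which is the same quantity as $\int\left|g-c\right|\left|f^{\prime}\right|\,dx$; this finiteness guarantees that the truncated integrals have finite limits, so the identity on $\left[-R,R\right]$ transfers cleanly to the whole line. Everything else is routine: the genuinely delicate points are the validity of the Leibniz rule for merely absolutely continuous/Lipschitz factors and the matching of the $L_{p}/L_{q}$ hypotheses that secures this dominated passage.
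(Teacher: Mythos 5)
Your proof is correct and follows essentially the same route as the paper's: the paper's entire argument is the one-line centered integration by parts $\mathbb{E}_{\nu}[g^{\prime}]=\int g^{\prime}f\,dx=-\int(g-\mathbb{E}_{\nu}[g])f^{\prime}\,dx=\int(g-\mathbb{E}_{\nu}[g])\phi^{\prime}f\,dx$, relying on the stated limits to kill the boundary term. You simply make explicit the truncation to $[-R,R]$, the Leibniz rule for the absolutely continuous product, and the H\"older bookkeeping that the paper leaves implicit.
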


\begin{proof}[Proof of Lemma \ref{lemma_expect_cov}]This a simple
integration by parts: from the assumptions, we have
\[
\mathbb{E}_{\nu}\left[g^{\prime}\right]=\int g^{\prime}fdx=\int\left(g-\mathbb{E}_{\nu}\left[g\right]\right)f^{\prime}dx=\int\left(g-\mathbb{E}_{\nu}\left[g\right]\right)\phi^{\prime}fdx.
\]

\end{proof}

\begin{proof}[Proof of Theorem \ref{theorem_stabi_convol}]Recall
that we have
\[
F_{X+Y}\left(x\right)=\int f_{Y}\left(y\right)F_{X}\left(x-y\right)dy=\int w\left(x,y\right)dy\text{ .}
\]
Our first goal is to find some conditions such that $F_{X+Y}$ is
log-concave. It is sufficient to prove that, for any $x\in J\left(F_{X+Y}\right)$,
\[
\frac{\left(F_{X+Y}^{\prime}\left(x\right)\right)^{2}}{F_{X+Y}\left(x\right)}-F_{X+Y}^{\prime\prime}\left(x\right)\geq0\,,
\]
or equivalently,
\[
\left(\frac{F_{X+Y}^{\prime}\left(x\right)}{F_{X+Y}\left(x\right)}\right)^{2}-\frac{F_{X+Y}^{\prime\prime}\left(x\right)}{F_{X+Y}\left(x\right)}\geq0\,.
\]
Denote $\rho_{X}=\left(\log F_{X}\right)^{\prime}$. We have
\begin{eqnarray*}
F_{X+Y}\left(x\right) & = & \int w\left(x,y\right)dy\\
f_{X+Y}\left(x\right) & = & F_{X+Y}^{\prime}\left(x\right)=\int\rho_{X}\left(x-y\right)w\left(x,y\right)dy\\
F_{X+Y}^{\prime\prime}\left(x\right) & = & \int\left(\rho_{X}^{\prime}\left(x-y\right)+\rho_{X}^{2}\left(x-y\right)\right)w\left(x,y\right)dy
\end{eqnarray*}
Furthermore, we get
\[
\left(\frac{F_{X+Y}^{\prime}\left(x\right)}{F_{X+Y}\left(x\right)}\right)^{2}-\frac{\int w\rho_{X}^{2}\left(x-y\right)dy}{F_{X+Y}\left(x\right)}=-\Variance_{m_{x}}\left(\rho_{X}\left(x-\cdot\right)\right)\text{ .}
\]
Now, by Lemma \ref{lemma_expect_cov}, it holds,
\begin{align*}
\frac{\int\rho_{X}^{\prime}\left(x-y\right)w\left(x,y\right)dy}{F_{X+Y}\left(x\right)} & =\mathbb{E}_{m_{x}}\left[\rho_{X}^{\prime}\left(x-\cdot\right)\right]\\
 & =\cov_{m_{x}}\left(-\rho_{X}\left(x-\cdot\right),\left(-\log f_{Y}\right)^{\prime}+\rho_{X}\left(x-\cdot\right)\right)\,.
\end{align*}
Gathering the equations, we get
\begin{align*}
\left(\frac{F_{X+Y}^{\prime}\left(x\right)}{F_{X+Y}\left(x\right)}\right)^{2}-\frac{F_{X+Y}^{\prime\prime}\left(x\right)}{F_{X+Y}\left(x\right)} & =\cov_{m_{x}}\left(-\rho_{X}\left(x-\cdot\right),\left(-\log f_{Y}\right)^{\prime}\right)\\
 & =\cov_{m_{x}}\left(-\log F_{X}\left(x-\cdot\right),\left(-\log f_{Y}\right)^{\prime}\right)\,,
\end{align*}
which gives condition (\ref{eq:cond_cov_1}). Likewise condition (\ref{eq:cond_cov_2})
arises from the same type of computations when studying log-concavity
of $\left(1-F_{X+Y}\right)$. \end{proof}

\subsection{Towards a multivariate notion of bi-log-concavity\label{sec:Towards-a-generalization}}

Let us introduce this section with the following remark. The isoperimetric
profile $I_{\mu}$ is defined as follows: for any $p\in\left(0,1\right)$,
\[
I_{\mu}\left(p\right)=\inf_{\mu\left(A\right)=p}\mu^{+}\left(A\right)\text{ ,}
\]
where $\mu^{+}\left(A\right)=\lim\inf_{h\rightarrow0^{+}}\left(\mu\left(A^{h}\right)-\mu\left(A\right)\right)/h$,
with $A^{h}=\left\{ x\in\mathbb{R}:\delta\left(x,A\right)<h\right\} $.
Note that the isoperimetric profile $I_{\mu}$ depends on the distance
$\delta$ that is considered. Unless explicitly mentioned, we will
consider in the following that the distance $\delta$ is the Euclidean
distance. From inequality (2.1) in \cite{Bobkov:96a}, we have for
a log-concave measure $\mu$ on $\mathbb{R}$ and any $h>0$,
\begin{eqnarray*}
\min_{\mu\left(A\right)\geq p}\mu\left(A+\left(-h,h\right)\right) & = & \min_{\mu\left(A\right)=p}\mu\left(A+\left(-h,h\right)\right)\\
 &  & =\min\left\{ F\left(F^{-1}\left(p\right)+h\right),1-F\left(F^{-1}\left(1-p\right)-h\right)\right\} \text{ .}
\end{eqnarray*}
Hence,
\begin{eqnarray*}
I_{\mu}\left(p\right) & = & \min\left\{ f\left(F^{-1}\left(p\right)\right),f\left(F^{-1}\left(1-p\right)\right)\right\} \\
 & = & \underset{A\text{ half-space of }\mathbb{R}}{\inf_{\mu\left(A\right)=p}}\mu^{+}\left(A\right)\text{ .}
\end{eqnarray*}
If $\mu$ is moreover symmetric, then $I_{\mu}\left(p\right)=f\left(F^{-1}\left(p\right)\right)$
for any $p\in\left(0,1\right)$. 

For a general measure $\mu$, we define the isoperimetric profile
restricted to half-spaces $I_{\mu}^{H}$: for any $p\in\left(0,1\right)$,
\[
I_{\mu}^{H}\left(p\right)=\underset{A\text{ half-space}}{\inf_{\mu\left(A\right)=p}}\mu^{+}\left(A\right)\text{ .}
\]
For a measure $\mu$ on $\mathbb{R}$ having a density $f$, one has
\[
I_{\mu}^{H}\left(p\right)=\min\left\{ f\left(F^{-1}\left(p\right)\right),f\left(F^{-1}\left(1-p\right)\right)\right\} \,,
\]
since possible half-spaces are in this case $\left(-\infty,x_{A}\right]$
or $\left[x_{A},+\infty\right)$. If $\mu$ is symmetric, then $I_{\mu}^{H}\left(p\right)=f\left(F^{-1}\left(p\right)\right)$
and bi-log-concavity is equivalent to non-increasingness of $p\mapsto I_{\mu}^{H}\left(p\right)/p$
on $\left(0,1\right)$. As proved in Bobkov \cite[Proposition A.1]{Bobkov:96a},
log-concavity on $\mathbb{R}$ is actually equivalent to concavity
of $f\left(F^{-1}\left(p\right)\right)$. 

Furthermore, as previously remarked, $I_{\mu}\equiv I_{\mu}^{H}$
in the one-dimensional log-concave case. The latter identity is still
true in higher dimension for the Gaussian measure when the distance
is given by the Euclidean norm (see \cite{MR0399402}) and this characterizes
Gaussian measures. In general, it also holds $I_{\mu}\leq I_{\mu}^{H}$
pointwise.

Take the distance $\delta$ to be given by the sup-norm, $\delta\left(x,y\right)=\left\Vert x-y\right\Vert _{\infty}$.
Then, for any set $A\subset\mathbb{R}$, we have $A^{h}=A+h\left[-1,1\right]^{n}$.
In this case, Bobkov \cite[Theorem 1.1]{Bobkov:96a} characterizes
symmetric log-concave measures for which $I_{\mu}^{H}=I_{\mu}$. 

Reverse relation in higher dimension between $I_{\mu}$ and $I_{\mu}^{H}$
when $\mu$ is log-concave, is related to the so-called KLS-hyperplane
conjecture (see for instance \cite{DBLP:conf/focs/LeeV17,DBLP:journals/corr/LeeV18}). 

A possible extension of the notion of bi-log-concavity is the following.

\begin{definition}\label{def_weak_blc} Let $\mu$ be a probability
measure on $\mathbb{R}^{d},$ $d\geq1$. Assume that $\mu$ is symmetric
around the origin. Then $\mu$ is said to be weakly bi-log-concave
(with respect to the distance $\delta$) if the function 
\[
p\mapsto I_{\mu}^{H}\left(p\right)/p
\]
is non-increasing on $\left(0,1\right)$. \end{definition}

The latter definition extends the definition of bi-log-concavity for
symmetric measures on the real line. However, we consider that the
definition is ``weak'' since, as we will see, it seems in fact natural
to ask for more. In the following, a symmetric measure is a measure
that is symmetric around the origin.

\begin{proposition}Symmetric log-concave measures on $\mathbb{R}^{d}$
are bi-log-concave (for the Euclidean distance).\end{proposition}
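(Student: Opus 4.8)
The plan is to reduce the $d$-dimensional claim to the one-dimensional marginals of $\mu$, and then to invoke the one-dimensional facts already at our disposal, so as to obtain \emph{both} log-concavity conditions rather than only the weak monotonicity of Definition~\ref{def_weak_blc}. For a unit vector $u\in\mathbb{R}^{d}$, let $\mu_{u}$ denote the image of $\mu$ under the linear form $x\mapsto\left\langle x,u\right\rangle$, with distribution function $G_{u}$ and density $g_{u}$. The first step is to observe that the Euclidean $h$-neighbourhood of a half-space with inner normal $u$ is again such a half-space, with its boundary translated by $h$ along $u$; hence the perimeter $\mu^{+}$ of a half-space coincides with the corresponding marginal density, and
\[
I_{\mu}^{H}\left(p\right)=\inf_{\left\Vert u\right\Vert =1}g_{u}\left(G_{u}^{-1}\left(p\right)\right)=:\inf_{\left\Vert u\right\Vert =1}I_{u}\left(p\right).
\]
More importantly, this identity shows that controlling the half-space isoperimetry of $\mu$ amounts to controlling every one-dimensional marginal $\mu_{u}$ separately.

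The second step is to verify that each marginal inherits the hypotheses. By Pr\'ekopa's theorem (\cite{MR0404557}), a marginal of a log-concave measure is log-concave, so each $g_{u}$ is a log-concave density on $\mathbb{R}$; and symmetry of $\mu$ about the origin forces each $\mu_{u}$ to be symmetric about $0$. By the one-dimensional argument recalled after Corollary~\ref{cor_isop_concen} (equivalently, by Theorem~\ref{theorem_charac}), a symmetric log-concave measure on the line is bi-log-concave, so \emph{every} marginal $\mu_{u}$ is bi-log-concave: for each direction $u$, both $g_{u}/G_{u}$ is non-increasing and $g_{u}/\left(1-G_{u}\right)$ is non-decreasing on $J\left(G_{u}\right)$.

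The third step is to assemble these directional statements into the multidimensional conclusion. Since each $\mu_{u}$ is bi-log-concave, Bobkov's characterization (\cite[Proposition A.1]{Bobkov:96a}) gives that each $I_{u}$ is concave on $\left(0,1\right)$ with $I_{u}\left(0^{+}\right)=I_{u}\left(1^{-}\right)=0$; as a pointwise infimum of concave functions is concave, $I_{\mu}^{H}=\inf_{u}I_{u}$ is itself concave on $\left(0,1\right)$, nonnegative, and vanishing at both endpoints (because $0\le I_{\mu}^{H}\le I_{u_{0}}$ for any fixed $u_{0}$). A nonnegative concave function vanishing at $0$ and $1$ makes $p\mapsto I_{\mu}^{H}\left(p\right)/p$ non-increasing \emph{and} $p\mapsto I_{\mu}^{H}\left(p\right)/\left(1-p\right)$ non-decreasing, i.e. both log-concavity conditions hold. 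This is the full bi-log-concavity asserted; note that the directional form also makes stability through convolution transparent, since by Proposition~\ref{prop_stab_log_con} each marginal of $X+Z$ (with $Z$ log-concave on $\mathbb{R}^{d}$) is again bi-log-concave.

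I expect the main obstacle to be the rigorous first step, namely the identification of the half-space perimeter $\mu^{+}$ with the marginal density $g_{u}$: this requires knowing that $g_{u}$ exists and is continuous (which follows from log-concavity of $\mu_{u}$) and a careful treatment of the $\liminf$ defining $\mu^{+}$, together with the behaviour at the endpoints of $J\left(G_{u}\right)$. Once this reduction is in place, the remaining ingredients --- Pr\'ekopa's marginalization theorem, the one-dimensional implication ``log-concave $\Rightarrow$ bi-log-concave'', and the elementary fact that an infimum of concave functions is concave --- are routine, as is the passage from concavity to the two monotonicity conditions.
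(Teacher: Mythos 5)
Your proposal is correct and follows essentially the same route as the paper: reduce to the one-dimensional marginals $\mu_{u}$ (log-concave by Pr\'ekopa's theorem, symmetric by symmetry of $\mu$), parameterize half-spaces by unit directions so that $I_{\mu}^{H}\left(p\right)=\inf_{\left\Vert u\right\Vert =1}I_{u}\left(p\right)$, and invoke Bobkov's concavity of $I_{u}$; the only difference is that you take the infimum of the concave profiles first and then pass to the quotient, whereas the paper passes to the monotone quotient $I_{\mu_{u}}\left(p\right)/p$ direction by direction and then takes the infimum --- an immaterial reordering, though yours yields concavity of $I_{\mu}^{H}$ itself as a pleasant by-product. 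One small inaccuracy to note: your claim that $I_{u}\left(0^{+}\right)=I_{u}\left(1^{-}\right)=0$ can fail (for $\mu$ uniform on $\left[-1,1\right]^{d}$, a coordinate marginal gives $I_{u}\equiv1/2$ on $\left(0,1\right)$), but it is not needed, since nonnegativity together with concavity already forces $I\left(p_{1}\right)\geq\left(p_{1}/p_{2}\right)I\left(p_{2}\right)$ for $p_{1}<p_{2}$, hence both monotonicity conditions.
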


\begin{proof}Take $u$ a unit vector and consider the measure $\mu_{u}$
defined to be the projection of the measure $\mu$ on the line containing
$0$ and directed by $u$. Consequently, $\mu_{u}$ is a log-concave
measure on $\mathbb{R}$, symmetric around zero. Hence $I_{\mu_{u}}\left(p\right)$
is concave and consequently, $I_{\mu_{u}}\left(p\right)/p$ is nonincreasing.
Since half-spaces are parameterized by unitary vectors together with
a point on the line containing zero and directed by the considered
unitary vector, this readily gives the nonincreasingness of $I_{\mu}\left(p\right)/p$.\end{proof}

One can notice that the latter proof is in fact only based on stability
of log-concavity through one-dimensional marginalizations. This naturally
leads to the following second definition of bi-log-concavity in higher
dimension. 

\begin{definition}Let $\mu$ be a probability measure on $\mathbb{R}^{d},$
$d\geq1$. Then $\mu$ is said to be weakly$-*$ bi-log-concave if
for every line $\ell\subset\mathbb{R}^{d}$, the (Euclidean) projection
measure $\mu_{\ell}$ of $\mu$ onto the line $\ell$ is a (one-dimensional)
bi-log-concave measure on $\ell$ (that can be possibly degenerate).
More explicitly, for any $x\in\ell$ and any Borel set $B\subset\mathbb{R}$,
\[
\mu_{\ell}\left(x+Bu\right)=\mu\left\{ y\in\mathbb{R}^{d}:\left(y-x\right)\cdot u\in B\right\} 
\]
where $u$ is a unit directional vector of the line $\ell$.\end{definition}

Note that weakly$-*$ bi-log-concave measures are not necessarily
symmetric. In the case of symmetric measures, the notion of weakly$-*$
bi-log-concavity is actually a strengthening of Definition \ref{def_weak_blc}.

\begin{proposition}\label{prop_weak_weak*}Let $\mu$ be a symmetric,
weakly$-*$ bi-log-concave probability measure on $\mathbb{R}^{d},$
$d\geq1$. Then $\mu$ is weakly bi-log-concave.\end{proposition}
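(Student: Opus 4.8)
The plan is to reduce the $d$-dimensional statement about $I_\mu^H(p)/p$ to the one-dimensional characterization of bi-log-concavity already recorded in the paper. The key observation is that a half-space $A$ of $\mathbb{R}^d$ is of the form $\{y : (y-x)\cdot u \le 0\}$ for some unit vector $u$ and some point $x$, and that the measure $\mu(A)$ together with the boundary measure $\mu^+(A)$ depend only on the one-dimensional projection $\mu_u := \mu_\ell$ onto the line $\ell$ directed by $u$. More precisely, if $F_u$ denotes the distribution function of $\mu_u$ and $f_u$ its density, then for a half-space $A$ with outer normal $u$ we have $\mu(A) = F_u(t)$ and $\mu^+(A) = f_u(t)$, where $t$ is the signed position of the bounding hyperplane along $u$. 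This is the content of the explicit computation $I_\mu^H(p) = \inf_u \min\{f_u(F_u^{-1}(p)), f_u(F_u^{-1}(1-p))\}$ that the paper carries out in the one-dimensional case and which extends verbatim by taking an extra infimum over directions $u$.

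First I would fix a unit vector $u$ and write $I_{\mu_u}^H(p)$ for the (one-dimensional) half-space isoperimetric profile of the projected measure $\mu_u$. From the displayed formula in the excerpt, valid for any one-dimensional measure with a density, $I_{\mu_u}^H(p) = \min\{f_u(F_u^{-1}(p)), f_u(F_u^{-1}(1-p))\}$. Taking the infimum over all half-spaces in $\mathbb{R}^d$ amounts to taking the infimum over all directions $u$ of these one-dimensional profiles, so that
\[
I_\mu^H(p) = \inf_{u} I_{\mu_u}^H(p)\,.
\]
This identity is the technical heart of the reduction, and I would state and verify it first.

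Next I would invoke the weakly$-*$ hypothesis: for every line $\ell$, the projection $\mu_\ell = \mu_u$ is a one-dimensional bi-log-concave measure. Since $\mu$ is symmetric around the origin, each projection $\mu_u$ is symmetric around zero, so by the paper's own remark (just before Definition \ref{def_weak_blc}) bi-log-concavity of the symmetric measure $\mu_u$ is equivalent to non-increasingness of $p \mapsto I_{\mu_u}^H(p)/p$ on $(0,1)$. Thus for each fixed $u$ the function $p \mapsto I_{\mu_u}^H(p)/p$ is non-increasing.

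Finally I would conclude by the elementary fact that a pointwise infimum of non-increasing functions is non-increasing. Dividing the displayed identity by $p$ gives
\[
\frac{I_\mu^H(p)}{p} = \inf_{u} \frac{I_{\mu_u}^H(p)}{p}\,,
\]
and since each summand on the right is non-increasing in $p$, so is their infimum; hence $p \mapsto I_\mu^H(p)/p$ is non-increasing on $(0,1)$, which is exactly weak bi-log-concavity of $\mu$. I expect the main obstacle to be making the identity $I_\mu^H(p) = \inf_u I_{\mu_u}^H(p)$ fully rigorous: one must check that the boundary measure $\mu^+(A)$ of a half-space genuinely reduces to the one-dimensional density $f_u$ of the projection (so that the infimum over half-spaces decomposes cleanly as an infimum over directions followed by the one-dimensional half-space infimum), and one should be careful that degenerate projections — explicitly allowed in the definition of weakly$-*$ bi-log-concavity — do not disrupt the monotonicity argument. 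These points are essentially the same verifications the paper already performs in the one-dimensional case, so the remaining work is routine bookkeeping rather than a new difficulty.
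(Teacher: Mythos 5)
Your proposal is correct and follows essentially the same route as the paper: both reduce $I_{\mu}^{H}$ to an infimum over directions of the one-dimensional half-space profiles of the projections, use symmetry plus the weakly$-*$ hypothesis to get non-increasingness of each $I_{\mu_{\ell}}^{H}(p)/p$, and conclude via the stability of monotonicity under pointwise infima. The additional verifications you flag (that $\mu^{+}$ of a half-space equals the projected density, and that degenerate projections do not contribute to the infimum) are sensible but are left implicit in the paper as well.
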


\begin{proof}By parametrization of half-spaces, we have the following
formula, for any $p\in$$\left(0,1\right)$,
\[
I_{\mu}^{H}\left(p\right)=\inf_{\textrm{lines}\,\ell;0\in\ell}I_{\mu_{\ell}}^{H}\left(p\right)\,.
\]
Then the conclusion follows by noticing that for any line $\ell$
such that $0\in\ell$, the projection measure $\mu_{\ell}$ of $\mu$
is symmetric and bi-log-concave. Hence, $I_{\mu_{\ell}}^{H}\left(p\right)/p$
is non-increasing on $\left(0,1\right)$ and so is $I_{\mu}^{H}\left(p\right)/p$.\end{proof}

The following result states that the notion of weakly$-*$ bi-log-concavity
is stable through convolution by log-concave measures.

\begin{proposition}The convolution of a log-concave measure with
a weakly$-*$ bi-log-concave one is weakly$-*$ bi-log-concave. \end{proposition}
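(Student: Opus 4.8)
The plan is to reduce the multivariate statement to the one-dimensional Proposition~\ref{prop_stab_log_con} by exploiting the fact that projection onto a line commutes with convolution. Let $\mu$ denote the log-concave measure and $\nu$ the weakly$-*$ bi-log-concave measure, both on $\mathbb{R}^{d}$. First I would fix an arbitrary line $\ell\subset\mathbb{R}^{d}$ with unit directional vector $u$, and represent $\mu*\nu$ as the law of $X+Y$, where $X\sim\mu$ and $Y\sim\nu$ are independent. The projection of $X+Y$ onto $\ell$ is then $(X+Y)\cdot u=X\cdot u+Y\cdot u$; since $X$ and $Y$ are independent, so are $X\cdot u$ and $Y\cdot u$, and therefore $(\mu*\nu)_{\ell}=\mu_{\ell}*\nu_{\ell}$. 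Equivalently, writing $\mu_{\ell}$ as the pushforward of $\mu$ under the linear functional $y\mapsto y\cdot u$, one uses that such a pushforward is a homomorphism of the additive group and hence intertwines convolution.

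Next I would identify the nature of each projected factor. By Pr\'ekopa's theorem on stability of log-concavity under marginalization (already invoked in the Introduction and in the proof of Proposition~\ref{prop_stab_log_con}), the projection $\mu_{\ell}$ of the log-concave measure $\mu$ is a one-dimensional log-concave measure. By the very definition of weakly$-*$ bi-log-concavity, the projection $\nu_{\ell}$ is a one-dimensional bi-log-concave measure on $\ell$.

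Then I would apply Proposition~\ref{prop_stab_log_con} to the one-dimensional pair $(\nu_{\ell},\mu_{\ell})$, concluding that $\mu_{\ell}*\nu_{\ell}=(\mu*\nu)_{\ell}$ is bi-log-concave on $\ell$. Since $\ell$ was an arbitrary line, every one-dimensional projection of $\mu*\nu$ is bi-log-concave, which is exactly the assertion that $\mu*\nu$ is weakly$-*$ bi-log-concave.

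The main obstacle I expect is the treatment of degenerate projections. The definition of weakly$-*$ bi-log-concavity explicitly permits $\nu_{\ell}$ to be degenerate, and $\mu_{\ell}$ may also collapse to a Dirac mass when $\mu$ is supported on a proper affine subspace, whereas Proposition~\ref{prop_stab_log_con} as stated presupposes the existence of densities $f_{X}$ and $f_{Y}$. To cover these boundary cases I would either invoke the closure of the set of bi-log-concave measures (Dirac masses included) under weak limits, noted right after Theorem~\ref{theorem_charac}, or observe directly that convolving with a nondegenerate log-concave factor restores an absolutely continuous, bi-log-concave projection. This is a technical rather than a conceptual difficulty: the conceptual content resides entirely in the commutation of projection with convolution together with the one-dimensional stability result.
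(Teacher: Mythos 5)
Your proof is correct and follows essentially the same route as the paper: the identity $(X+Y)\cdot u = X\cdot u + Y\cdot u$ reduces the claim to the one-dimensional stability result of Proposition~\ref{prop_stab_log_con}. Your explicit treatment of degenerate projections is a welcome extra care that the paper's own one-line proof leaves implicit.
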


\begin{proof}The formula $\left(X+Y\right)\cdot u=X\cdot u+Y\cdot u$
shows that the projection of the convolution of two measures on a
line is the convolution of the projections of measures on this line.
This allows to reduce the stability through convolution by a log-concave
measure to dimension one and concludes the proof.\end{proof}

As for the log-concave case, it is moreover directly seen that weakly
and weakly$-*$ log-concavity are stable by affine transformations
of the space.

Actually, in addition to containing log-concave measures and being
stable through convolution by a log-concave measure, there are at
least two other properties that one would naturally require for a
convenient multidimensional concept of bi-log-concavity: existence
of a density with respect to the Lebesgue measure on the convex hull
of its support and existence of a finite exponential moment for the
(Euclidean) norm. We can express this latter remark through the following
open problem, that concludes this note.
\begin{flushleft}
\textbf{Open Problem:} Find a nice characterization of probability
measures on $\mathbb{R}^{d}$ that are weakly$-*$ bi-log-concave,
that admit a density with respect to the Lebesgue measure on the convex
hull of their support and whose Euclidean norm has exponentially decreasing
tails.
\par\end{flushleft}

\begin{acknowledgement}I express my deepest gratitude to Jon Wellner,
who introduced me to the intriguing notion of bi-log-concavity and
who provided several numerical computations during his visit at the
Crest-Ensai, that helped to understand the convolution problem for
bi-log-concave measures. Many thanks also to Jon for his comments
on a previous version of this note.\end{acknowledgement}

\bibliographystyle{amsalpha}
\bibliography{chern}
{\footnotesize{} }{\footnotesize\par}
\end{document}